\numberwithin{equation}{section}
\newtheorem{pro}{Proposition}[section]
\newtheorem{lemma}[pro]{Lemma}
\newtheorem{theorem}[pro]{Theorem}
\newtheorem{remark}[pro]{Remark}
\newtheorem{corollary}[pro]{Corollary}
\newtheorem*{corollary*}{Corollary}
\newtheorem*{thm*}{Theorem}
\newtheorem*{thmA}{Theorem A}
\newtheorem*{corollaryB}{Corollary B}
\newtheorem*{corollaryC}{Corollary C}
\newtheorem*{thmE}{Theorem E}
\theoremstyle{definition}
\newtheorem{definition}[pro]{Definition}
\theoremstyle{remark}
\newcommand{\al}{\alpha}
\newcommand{\be}{\beta}
\newcommand{\ga}{\gamma}
\newcommand{\se}{\theta}
\newcommand{\lam}{\lambda}
\newcommand{\ra}{\rightarrow}
\newcommand{\lra}{\longrightarrow}
\newcommand{\CC}{{\mathbb C}}
\newcommand{\RR}{{\mathbb R}}
\newcommand{\QQ}{{\mathbb Q}}
\newcommand{\ZZ}{{\mathbb Z}}
\newcommand{\fA}{{\mathsf{A}}}
\newcommand{\cf}{{cf$.$\,}}
\newcommand{\ie}{{i.e$.$\,}}
\begin{document}

\pagestyle{myheadings} \thispagestyle{empty} \setcounter{page}{1}

\title[]
{Torus Actions and the Halperin-Carlsson Conjecture}

\author[]{Yoshinobu Kamishima}
\address{Department of Mathematics\\
Tokyo Metropolitan University\\
Minami-Ohsawa 1-1,Hachioji, Tokyo 192-0397, JAPAN}
\email{kami@tmu.ac.jp}
\author[]{Mayumi Nakayama}
\address{Department of Mathematics and
Information of Sciences\\
Tokyo Metropolitan
University\\
Minami-Ohsawa 1-1, Hachioji, Tokyo 192-0397, JAPAN}
\email{nakayama-mayumi@ed.tmu.ac.jp}

\date{\today}
\keywords{Torus action, Halperin-Carlsson Conjecture, Seifert fiber
space, Riemannian flat manifold, K\"ahler manifold.}
\subjclass[2000]{53C55, 57S25, 51M10}






\begin{abstract}
We give an affirmative answer to the Halperin-Carlsson conjecture
for the homologically injective torus actions on closed manifolds.\\\
This class contains \emph{holomorphic torus actions on compact
K\"ahler manifolds}, \emph{torus actions on compact Riemannian flat
manifolds}.
\end{abstract}

\maketitle

\section{Introduction}
Recently the real Bott tower and its generalization have been
studied by several people (\cite{CMS}, \cite{LM}, \cite{MP},
\cite{MN},\cite{KN}). A real Bott manifold is originally defined to
be the set of real points in the Bott manifold \cite{GK}.  Among
several characterizations by
 group actions,
the Halperin-Carlsson conjecture is true for real Bott manifolds.
The Halperin-Carlsson torus conjecture says that if there is an
almost free torus action $T^k$ on a closed $n$-manifold $M$, the
following inequality holds:
\begin{equation}\label{HCC}
2^k\leq \mathop{\sum}_{j=0}^n b_j.
\end{equation}Here $b_j={\rm rank}\, H_j(M;\ZZ)$ is the j-th Betti number
of $M$. See \cite{PU} for details and the references therein, see
also \cite{HA}.

Another characterization is that a real Bott manifold $M$ is  a
euclidean space form (Riemannian flat manifold) admitting a torus
action $T^k$ with $k={\rm rank }\, H_1(M)$. It is conceivable
whether the \emph{Halperin-Carlsson conjecture} holds for compact
euclidean space forms more generally.

By this motivation we revisit the classical results of the Calabi
construction of euclidean space forms with nonzero $b_1$ \cite{CALB}
and the Conner-Raymond's injective torus actions \cite{C-R1}. In
this direction, we shall introduce \emph{injective-splitting action}
of a torus $T^k$ on closed manifolds more generally. Our purpose of
this paper is to prove the Halperin-Carlsson conjecture for such
torus actions affirmatively.

Let $T^k$ be a $k$-dimensional torus $(k\geq 1)$. Given an
\emph{effective} $T^k$-action on a closed manifold $M$,
 the orbit map at $x\in M$ is defined to be
 ${\rm ev}(t)=tx$ $({}^\forall\, t\in T^k)$. Put
 $\pi_1(T^k)=H_1(T^k;\ZZ)=\ZZ^k$ and $\pi_1(M)=\pi$.
The map ${\rm ev}$ induces a homomorphism $\displaystyle {\rm
ev}_\#: \ZZ^k\ra \pi$ and $\displaystyle {\rm ev}_*: \ZZ^k\ra
H_1(M;\ZZ)$ respectively. According to the definition of
Conner-Raymond \cite{C-R1}, if ${\rm ev}_\#$ is \emph{injective},
the action $(T^k,M)$ is said to be \emph{injective}. ( Refer to
\cite[Theorem 2.4.2, also Subsection 11.1]{LR} for the definition to
be independent of the choice of
 the base  point $x\in M$.)
 Classically it is known that $\displaystyle {\rm
ev}_\#$ is injective for closed \emph{aspherical} manifolds
\cite{C-R}. On the other hand, if $\displaystyle {\rm ev}_*:
\ZZ^k\ra H_1(M;\ZZ)$, 
the $T^k$-action is said to be \emph{homologically injective}
\cite{C-R1}. We have shown

\begin{thmA}
If $T^k$ is a homologically injective action on a closed
$n$-manifold $M$, then
\begin{equation}
{}_kC_j\leq b_j.
\end{equation}
In particular the Halperin-Carlsson conjecture is true.
\end{thmA}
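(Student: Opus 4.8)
The plan is to translate the homological hypothesis into a statement about the ring homomorphism induced on rational cohomology by the orbit map, and then to exploit the fact that $H^*(T^k;\QQ)$ is an exterior algebra generated in degree one. Fix a base point $x\in M$ and let ${\rm ev}\colon T^k\to M$ be the orbit map, so that ${\rm ev}_*\colon H_1(T^k;\ZZ)=\ZZ^k\ra H_1(M;\ZZ)$ is injective by hypothesis. First I would note that, because $H_1(T^k;\ZZ)$ is free and $\QQ$ is flat over $\ZZ$, this injectivity survives after tensoring with $\QQ$; hence ${\rm ev}_*\colon H_1(T^k;\QQ)\ra H_1(M;\QQ)$ is injective. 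Dualizing over the field $\QQ$ (universal coefficients, with no Ext contribution) then shows that the degree-one map ${\rm ev}^*\colon H^1(M;\QQ)\ra H^1(T^k;\QQ)$ is \emph{surjective}.

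Next I would upgrade surjectivity in degree one to surjectivity in every degree. Write $H^*(T^k;\QQ)=\Lambda(x_1,\dots,x_k)$ with $\deg x_i=1$, so that the monomials $x_{i_1}\cup\cdots\cup x_{i_j}$ with $i_1<\cdots<i_j$ form a basis of $H^j(T^k;\QQ)$, which therefore has dimension ${}_kC_j$. Using the surjectivity just established, choose classes $\alpha_1,\dots,\alpha_k\in H^1(M;\QQ)$ with ${\rm ev}^*(\alpha_i)=x_i$. Since ${\rm ev}^*$ is a homomorphism of graded rings, naturality of the cup product gives
\begin{equation}
{\rm ev}^*(\alpha_{i_1}\cup\cdots\cup\alpha_{i_j})=x_{i_1}\cup\cdots\cup x_{i_j}.
\end{equation}
Thus the image of ${\rm ev}^*$ in degree $j$ contains a basis of $H^j(T^k;\QQ)$, so ${\rm ev}^*$ is surjective in each degree.

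Finally I would read off the Betti number inequality. The products $\alpha_{i_1}\cup\cdots\cup\alpha_{i_j}$ with $i_1<\cdots<i_j$ are linearly independent in $H^j(M;\QQ)$, since any nontrivial relation among them would map under ${\rm ev}^*$ to a nontrivial relation among the basis elements $x_{i_1}\cup\cdots\cup x_{i_j}$, which is impossible. Hence $b_j=\dim_\QQ H^j(M;\QQ)\geq {}_kC_j$, the asserted inequality. Summing over $j$ and using $\sum_{j=0}^k {}_kC_j=2^k$ yields $2^k\leq\sum_{j=0}^n b_j$, which is exactly the Halperin-Carlsson inequality \eqref{HCC}.

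I expect no serious obstacle, as the argument is essentially formal once the hypothesis is correctly interpreted. The one point demanding care is the dualization step, where integral injectivity of ${\rm ev}_*$ must be converted into rational surjectivity of ${\rm ev}^*$ in degree one; this is precisely where \emph{homological} injectivity (rather than mere injectivity of ${\rm ev}_\#$ on $\pi_1$) enters, and it proceeds cleanly because $H_1(T^k;\ZZ)$ is torsion-free so that flatness of $\QQ$ preserves the injection. The remainder is the standard observation that a ring map onto an algebra generated in degree one is surjective in all degrees.
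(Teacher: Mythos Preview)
Your argument is correct and considerably more direct than the paper's. The paper proves Theorem~A by first showing (Corollary~\ref{CAl}) that a homologically injective $T^k$-action is \emph{injective-splitting}: the central extension $1\to\ZZ^k\to\pi\to Q\to 1$ splits over a finite-index normal subgroup $Q'\leq Q$. It then invokes Theorem~\ref{splitfinite}, whose proof is substantially geometric: one realizes $\tilde M=\RR^k\times W$, shows via a cocycle computation and a transfer argument that the finite cover $T^k\mathop{\times}_{Q'}W$ is equivariantly diffeomorphic to $T^k\times W/Q'$, and then uses the transfer isomorphism $H_j(T^k\mathop{\times}_{Q'}W;\QQ)^F\cong H_j(M;\QQ)$ together with the triviality of the $F$-action on the $T^k$-factor to conclude that ${\rm ev}_*:H_j(T^k;\QQ)\to H_j(M;\QQ)$ is injective for every $j$. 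Your proof reaches the same conclusion (dually, surjectivity of ${\rm ev}^*$ in each degree) purely from the ring structure: since $H^*(T^k;\QQ)$ is generated in degree one, surjectivity in degree one propagates automatically. What you gain is brevity and the avoidance of any covering-space or Seifert-fibering machinery; what the paper gains is the structural equivalence between \emph{homologically injective} and \emph{injective-splitting} (Remark~\ref{equiv}), which is what allows it to handle cases such as Theorem~\ref{flatHC} where the splitting is verified directly and homological injectivity is a \emph{conclusion} rather than a hypothesis.
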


To prove Theorem A we revisit the Conner and Raymond's work
\cite{C-R1}. Then this is a consequence from
\emph{injective-splitting
 torus actions} more generally. See Theorem
\ref{splitfinite}. We verify that the following actions are in fact
injective-splitting torus actions.

\begin{corollaryB}
Every effective $T^k$-action on a compact $n$-dimensional euclidean
space form $M$ is homologically injective. Thus $\displaystyle
{}_kC_j\leq b_j$, the Halperin-Carlsson conjecture \eqref{HCC}
holds.
\end{corollaryB}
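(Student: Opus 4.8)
The plan is to establish homological injectivity directly and then quote Theorem~A. Write the compact euclidean space form as $M=\RR^n/\pi$, where $\pi$ is a torsion-free crystallographic (Bieberbach) group. By the first Bieberbach theorem $\pi$ sits in a short exact sequence $1\to L\to\pi\to\Phi\to 1$, with $L\cong\ZZ^n$ the maximal translation lattice and $\Phi$ the finite holonomy group acting on $L$, and hence on $V:=L\otimes\RR=\RR^n$, through the holonomy representation. Since $\RR^n$ is contractible, $M$ is aspherical, so by the Conner--Raymond result quoted above the effective $T^k$-action is injective: $\mathrm{ev}_\#\colon\ZZ^k\hookrightarrow\pi$ has image $A\cong\ZZ^k$.

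Next I would locate $A$ inside $\pi$. In the Conner--Raymond theory of injective torus actions the image $A=\mathrm{ev}_\#(\ZZ^k)$ is central in $\pi$, and a central element of a Bieberbach group is forced to be a pure translation: if an element with rotational part $B$ commutes with every translation $t_v$, then $t_{Bv}=t_v$ for all $v\in L$, so $Bv=v$ on $L$ and hence $B=\mathrm{id}$ on $V$. Thus $A\subseteq Z(\pi)=L^{\Phi}$, the subgroup of holonomy-invariant lattice vectors; in particular $A$ consists of translations and is torsion-free. This already gives $k\le\rank L^{\Phi}=b_1$, which is the expected constraint.

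The heart of the matter is to show that the composite $A\hookrightarrow\pi\to\pi^{\mathrm{ab}}=H_1(M;\ZZ)$ is injective, equivalently $A\cap[\pi,\pi]=\{e\}$. Because $A\cong\ZZ^k$ is free, it suffices to prove injectivity after tensoring with $\QQ$. Using the five-term exact sequence of $1\to L\to\pi\to\Phi\to 1$ together with $H_1(\Phi;\QQ)=H_2(\Phi;\QQ)=0$ (as $\Phi$ is finite), I would identify $H_1(M;\QQ)\cong (L)_\Phi\otimes\QQ=V_\Phi$, the space of holonomy-coinvariants, and note $b_1=\dim V^{\Phi}=\dim V_\Phi$. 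The Maschke splitting $V=V^{\Phi}\oplus I_\Phi V$, where $I_\Phi V$ is spanned by the vectors $(\phi-\mathrm{id})v$, shows that the natural projection $V^{\Phi}\to V_\Phi$ is an isomorphism. Since $A\otimes\QQ\subseteq L^{\Phi}\otimes\QQ=V^{\Phi}$, the map $A\otimes\QQ\to H_1(M;\QQ)$ is injective, and therefore so is $A\to H_1(M;\ZZ)$.

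This proves the action is homologically injective; the inequality ${}_kC_j\le b_j$, and with it the Halperin--Carlsson conjecture \eqref{HCC}, then follows immediately from Theorem~A. The step I expect to be the main obstacle is the middle one: correctly pinning down $A$ as a group of central, holonomy-invariant translations, and identifying $H_1(M;\QQ)$ with the coinvariants $V_\Phi$ so that the invariants-to-coinvariants isomorphism applies. Once the Maschke decomposition is in place, the passage from rational to integral injectivity via torsion-freeness of $A$ is routine.
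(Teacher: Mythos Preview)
Your argument is correct, but it proceeds by a different route than the paper. The paper does not verify homological injectivity directly; instead it shows the action is \emph{injective-splitting} in the sense of Definition~\ref{insp} and then invokes Theorem~\ref{splitfinite}. Concretely, from $\ZZ^k\leq C(\pi)\leq\ZZ^n$ the paper passes to $Q'=\ZZ^n/\ZZ^k$, splits off the free part $\ZZ^{n-k}$, and pulls back to a finite-index normal subgroup $\fA\cong\ZZ^k\times\ZZ^{n-k}$ of $\pi$ over which the central extension splits; Theorem~\ref{splitfinite} then supplies both homological injectivity and the Betti number estimate at once. Your approach instead stays inside the Bieberbach description: you identify $A\subseteq Z(\pi)=L^{\Phi}$, compute $H_1(M;\QQ)\cong V_{\Phi}$ via the five-term sequence, and use the Maschke isomorphism $V^{\Phi}\xrightarrow{\cong}V_{\Phi}$ to conclude that $A\to H_1(M;\ZZ)$ is injective, after which Theorem~A finishes. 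Your argument is more self-contained and uses only elementary representation theory of the finite holonomy group, bypassing the Seifert-construction machinery behind Theorem~\ref{splitfinite}; the paper's argument, on the other hand, illustrates its own framework and makes explicit the finite-cover product structure $T^k\times W/Q'$ that underlies the Betti number inequality.
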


We obtain a characterization of \emph{holomorphic} torus actions
originally observed by Carrell \cite{CA}.
\begin{corollaryC}
Every holomorphic action of the complex torus $T^k_\CC$ on a
 compact K\"ahler manifold is homologically injective. In particular,
 $\displaystyle {}_{2k}C_j\leq b_j$, the
Halperin-Carlsson conjecture holds.
\end{corollaryC}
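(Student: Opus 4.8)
The plan is to reduce everything to Theorem~A. The complex torus $T^k_\CC$ has underlying real torus $T^{2k}$, so once we show that the action is homologically injective, Theorem~A gives ${}_{2k}C_j\le b_j$, and summing over $j$ produces $2^{2k}\le\sum_j b_j$, i.e.\ the Halperin--Carlsson inequality \eqref{HCC}. Thus the entire task is to prove that $\mathrm{ev}_*\colon\ZZ^{2k}\to H_1(M;\ZZ)$ is injective. As $\ZZ^{2k}$ is torsion free this is equivalent to injectivity after $\otimes\,\RR$, hence to surjectivity of the dual $\mathrm{ev}^*\colon H^1(M;\RR)\to H^1(T^{2k};\RR)\cong\RR^{2k}$.

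First I would move to cohomology and invoke Hodge theory. Since $M$ is compact K\"ahler, $H^1(M;\CC)=H^0(M,\Omega^1)\oplus\overline{H^0(M,\Omega^1)}$, every holomorphic $1$-form is closed, and the two summands are exchanged by complex conjugation. The orbit map $\mathrm{ev}\colon T^k_\CC\to M$ is holomorphic, so $\mathrm{ev}^*$ is a morphism of Hodge structures; by conjugation symmetry, surjectivity of $\mathrm{ev}^*$ on $H^1(\,\cdot\,;\RR)$ is therefore equivalent to surjectivity of $\mathrm{ev}^*\colon H^0(M,\Omega^1)\to H^0(T^k_\CC,\Omega^1)\cong\CC^k$. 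The target is spanned by the invariant forms dual to the abelian Lie algebra $\lie{t}=\CC^k$ of fundamental holomorphic vector fields, and for $\alpha\in H^0(M,\Omega^1)$ and $X\in\lie{t}$ the contraction $\alpha(X)$ is a holomorphic function on the compact $M$, hence a constant. Under this identification $\mathrm{ev}^*$ becomes $\alpha\mapsto(\alpha(X_1),\dots,\alpha(X_k))$, so the required surjectivity is exactly the nondegeneracy of this pairing: for each nonzero $X\in\lie{t}$ some holomorphic $1$-form does not annihilate $X$.

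Equivalently, let $a\colon M\to\mathrm{Alb}(M)$ be the Albanese map, where $\mathrm{Alb}(M)=H^0(M,\Omega^1)^*/H_1(M;\ZZ)$. The connected group $T^k_\CC$ acts on the torus $\mathrm{Alb}(M)$ by translations, so $a$ intertwines the action with a homomorphism of complex tori $\rho\colon T^k_\CC\to\mathrm{Alb}(M)$; since $a_*$ is an isomorphism on the free part of $H_1$, the pairing above is nondegenerate precisely when $\rho$ is injective. This is the crux. I would average the K\"ahler metric over the compact real torus $T^{2k}$ so that it acts by holomorphic isometries. If some nonzero $X\in\lie{t}$ is annihilated by every holomorphic $1$-form, then $h(X)=0$ for all harmonic $1$-forms $h$, so for the Killing field $\xi=\mathrm{Re}\,X$ the closed form $\iota_\xi\omega=(J\xi)^\flat$ is orthogonal to the harmonic forms and hence cohomologous to zero; thus the complex subtorus $A'\subseteq T^k_\CC$ generated by $X$ acts in a Hamiltonian fashion. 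A nonzero Hamiltonian Killing field on a compact symplectic manifold must vanish somewhere, so $A'$ fixes a point $p\in M$.

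Finally I would use the rigidity of compact complex tori to eliminate this fixed point. The isotropy representation $A'\to\GL(T_pM)$ is holomorphic, and its matrix entries are global holomorphic functions on the compact torus $A'$, hence constant; so the representation is trivial. Then $A'$ acts trivially on a neighbourhood of $p$, and by analyticity of the action it acts trivially on all of $M$, contradicting effectiveness once $\dim A'>0$. Therefore no such $X$ exists, $\rho$ is injective, and the pairing is nondegenerate. Unwinding the reductions, $\mathrm{ev}^*$ is surjective, the action is homologically injective, and Theorem~A delivers ${}_{2k}C_j\le b_j$ and the Halperin--Carlsson inequality; this recovers Carrell's observation \cite{CA}. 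I expect the Hamiltonian/fixed--point step together with the non--representability of compact complex tori to be the main obstacle, the Hodge--theoretic reduction and the final appeal to Theorem~A being routine.
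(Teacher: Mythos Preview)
Your argument is correct and reaches the same conclusion, but it takes a more elaborate route than the paper's. Both proofs begin by averaging the metric so that $T^k_\CC$ acts by K\"ahler isometries, and both ultimately rest on the same key fact: a holomorphic homomorphism from a compact complex torus into $\GL(n,\CC)$ is trivial, because its matrix entries are global holomorphic functions and hence constant. The paper invokes this fact \emph{at the outset} to show that every connected isotropy subgroup is trivial, so each fundamental field $\xi_i$ is nowhere zero; it then writes down the closed $1$-forms $\theta_i=\iota_{\xi_i}\Omega$ and $\theta_{k+i}=\iota_{J\xi_i}\Omega$ directly and checks $\mathrm{ev}^*\theta_i(\xi_{k+i})=\Omega(\xi_i,J\xi_i)=g(\xi_i,\xi_i)>0$, which already forces $\mathrm{ev}^*$ to be surjective on $H^1(\,\cdot\,;\RR)$. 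Your path---through the Hodge decomposition, the Albanese map, and the vanishing of a Hamiltonian Killing field---arrives at the same isotropy lemma only at the end, as the source of the contradiction. What you gain is a clean conceptual picture (the obstruction to homological injectivity is precisely the kernel of $T^k_\CC\to\mathrm{Alb}(M)$); what the paper's argument buys is brevity and no appeal to Hodge theory beyond $d\Omega=0$. One small wording issue: ``the complex subtorus $A'\subseteq T^k_\CC$ generated by $X$'' is not well defined for a single vector. What you actually need is the identity component of the stabilizer at the zero $p$ of $\xi$; this is automatically a complex subtorus (the stabilizer is the fibre of the holomorphic orbit map, hence a complex-analytic subgroup), and it has positive dimension because the closure of $\{\exp t\xi\}$ lies inside it. With that adjustment your isotropy step goes through exactly as in the paper.
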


In Section 2, we introduce \emph{injective-splitting actions} on
closed manifolds and prove our main theorem \ref{splitfinite}. Using
this theorem, we show Corollaries B and C. In Section 3 we shall
give a proof concerning the existence of torus actions common to
both the Calabi's theorem and the Conner-Raymond's theorem as our
motivation (\cf Theorem \ref{reprove}).
\begin{thmE} A compact $n$-dimensional euclidean space form $M$
admits a homologically injective action of $T^k$ with $k={\rm
rank}\, H_1(M)$ in which ${\rm rank}\, C(\pi)={\rm rank}\, H_1(M)$.
\end{thmE}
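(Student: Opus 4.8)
The plan is to build the required torus action explicitly from the central translations of the Bieberbach group $\pi=\pi_1(M)$, and then to extract both the dimension count and the homological injectivity from the semisimplicity of the holonomy representation. Write $M=\RR^n/\pi$ and recall the Bieberbach sequence
\[
1 \to \Lambda \to \pi \to \Phi \to 1 ,
\]
where $\Lambda=\pi\cap\RR^n\cong\ZZ^n$ is the maximal translation lattice and $\Phi$ is the finite holonomy group acting orthogonally on $V=\RR^n$ via $\rho\colon\Phi\to\mathrm{O}(n)$. A direct commutator computation in $\Isom(\RR^n)=\RR^n\rtimes\mathrm{O}(n)$ shows that a central element must be a pure translation fixed by every holonomy element, so $C(\pi)=\Lambda^{\Phi}=\Lambda\cap W$ with $W=V^{\Phi}$; since $\Lambda^{\Phi}\otimes\RR=V^{\Phi}$, this is a full lattice in $W$ and $\rank C(\pi)=\dim W=:k$.

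First I would construct the action. Translation by any $w\in W=V^{\Phi}$ commutes with every $\gamma=(b,B)\in\pi$, because $Bw=w$ for the holonomy part $B\in\Phi$; hence the additive group $W\cong\RR^{k}$ acts on $\RR^n$ commuting with $\pi$ and descends to $M$. The isotropy at every point is exactly $W\cap\pi=\Lambda\cap W=C(\pi)\cong\ZZ^{k}$, so the effective quotient $T^{k}=W/C(\pi)$ acts on $M$, and the orbit map $\mathrm{ev}\colon T^{k}\to M$, $[w]\mapsto[\tilde{x}+w]$, induces on fundamental groups precisely the inclusion $\mathrm{ev}_{\#}\colon\ZZ^{k}=C(\pi)\hookrightarrow\pi$. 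Thus the action is injective in the sense of Conner--Raymond, and $k=\rank C(\pi)$ by construction.

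Next I would settle the two numerical claims by averaging over $\Phi$. Maschke's theorem splits $V=V^{\Phi}\oplus[V,\Phi]$ with $[V,\Phi]=\mathrm{span}\{v-\rho(\phi)v\}$, and the dual splitting identifies $H^{1}(M;\RR)=\Hom(\pi,\RR)\cong(V^{*})^{\Phi}\cong(V^{\Phi})^{*}$ (the inflation--restriction sequence removes the $\Phi$-terms since $\Phi$ is finite). Dually $H_{1}(M;\RR)\cong V_{\Phi}\cong V^{\Phi}=W$, which gives at once $\rank H_{1}(M)=\dim W=k=\rank C(\pi)$, the last assertion of the theorem. Under this identification the map on first homology induced by $\mathrm{ev}_{\#}$ is the composite $C(\pi)\otimes\RR=V^{\Phi}\hookrightarrow V\twoheadrightarrow V_{\Phi}=H_{1}(M;\RR)$, an isomorphism by the splitting; since $C(\pi)$ is torsion-free, $\mathrm{ev}_{*}\colon C(\pi)\to H_{1}(M;\ZZ)$ then has trivial kernel, so the action is homologically injective. (Alternatively, once the effective $T^{k}$-action is in hand, homological injectivity follows immediately from Corollary B.)

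The routine parts are the commutator identity, the effectiveness, and the averaging computations. The genuine point --- and the reason the statement is special to the flat rather than the general aspherical case --- is the homological injectivity: for an arbitrary closed aspherical $M$ the Conner--Raymond action is injective but central elements may die, becoming torsion in $H_{1}(M;\ZZ)$. Here the orthogonality of the holonomy representation is what saves the argument, since the Maschke splitting $V=V^{\Phi}\oplus[V,\Phi]$ forces the central lattice $C(\pi)=\Lambda^{\Phi}$ to map isomorphically, rationally, onto $H_{1}(M;\RR)$, so that it captures the full first Betti number and $\mathrm{ev}_{*}$ is injective. This semisimplicity is the common mechanism underlying both Calabi's fibration theorem and the Conner--Raymond construction, which is precisely what Theorem E is meant to unify.
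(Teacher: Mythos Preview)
Your argument is correct, but it follows a genuinely different route from the paper's. The paper proceeds in two steps. First, Lemma~\ref{Bieba} starts from the Calabi extension $1\to\Gamma\to\pi\to\ZZ^{k}\to1$ (with $k=\rank H_{1}(M)$), observes that in the resulting block form the holonomy need not preserve the $\RR^{k}$-factor, and therefore rebuilds a representation $\rho\colon\pi\to\mathrm{E}(n)$ via a Seifert-type cocycle argument so that $0\times\RR^{k}$ centralizes $\rho(\pi)$; Bieberbach rigidity then transports the resulting $T^{k}$-action back to $M$. Second, homological injectivity is deduced from the general machinery of Theorem~\ref{splitfinite} (through Theorem~\ref{flatHC}), and the equality $\rank C(\pi)=\rank H_{1}(M)$ is obtained by combining the two inequalities $s\le\rank C(\pi)$ (aspherical bound) and $\rank C(\pi)\le\rank H_{1}(M)$ (from Theorem~\ref{splitfinite}).

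You bypass all of this by working directly with the fixed subspace $W=V^{\Phi}$ in the \emph{given} flat model: since $W$ is tautologically holonomy-invariant, translation by $W$ already centralizes $\pi$, so no new representation is needed, and Maschke's splitting $V=V^{\Phi}\oplus[V,\Phi]$ simultaneously yields $\rank C(\pi)=\dim W=\rank H_{1}(M)$ and the isomorphism $V^{\Phi}\cong V_{\Phi}$ that gives homological injectivity. Your approach is shorter and more self-contained; the paper's approach has the virtue of exhibiting the flat case as an instance of its injective-splitting framework and of making explicit the link with Calabi's fibration. One small wording point: what you compute as ``the isotropy at every point'' is really the kernel of the $W$-action on $M$ (individual stabilizers could in principle be larger); this does not affect your argument, since effectiveness and the identification $\mathrm{ev}_{\#}=\iota\colon C(\pi)\hookrightarrow\pi$ only require the kernel.
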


\section{Injective torus actions}\label{split}
Suppose $(T^k,M)$ is an \emph{injective action} on a closed manifold
$M$. Let $\tilde M$ be the universal covering space of $M$ and
 denote $N_{{\rm Diff}(\tilde M)}(\pi)$ the normalizer of $\pi$ in
${\rm Diff}(\tilde M)$. The conjugation homomorphism $\displaystyle
\mu: N_{{\rm Diff}(\tilde M)}(\pi)\ra{\rm Aut}(\pi)$ defined by
$\mu(\tilde f)(\ga)=\tilde f\circ\ga\circ \tilde f^{-1}$
$({}^\forall\,\ga\in \pi)$ induces a homomorphism $\varphi$ which
has a commutative diagram:
\begin{equation}\label{lift1}
\begin{CD}
 1@. 1@. 1 \\
 @VVV @VVV @VVV\\
 C(\pi)@>>> \pi @>\mu>> {\rm Inn}(\pi)\\
  @VVV  @VVV @VVV\\
C_{{\rm Diff}(\tilde M)}(\pi)@>>> N_{{\rm Diff}(\tilde M)}(\pi)
@>\mu>> {\rm Aut}(\pi)\\
 @V\nu VV @V\nu VV @VVV\\
{\rm Diff}(M)^0\leq {\rm ker}\,\varphi@>>>{\rm Diff}(M)
@>\varphi>> {\rm Out}(\pi)\\
 @VVV @VVV @VVV\\
 1@. 1@. 1 \\
\end{CD}
\end{equation}(Compare \cite{LR1}.) Here $C_{{\rm Diff}(\tilde M)}(\pi)$
is the centralizer of $\pi$ in ${\rm Diff}(\tilde M)$. As $T^k\leq
{\rm Diff}(M)^0$, we have a lift $\tilde T^k\leq C_{{\rm
Diff}(\tilde M)}(\pi)$ to $\tilde M$.  Note  $\tilde T^k=\RR^k$. For
this, suppose  some $S^1\leq T^k$ lifts to $S^1$ (but not $\RR$) on
$\tilde M$. If $p:\tilde M\ra M$ is the covering map which is
equivariant; $p(tx)=t^mp(x)$ $(t\in S^1)$ for some $m\in \ZZ$,
chasing a commutative diagram
\begin{equation*}\label{lift2}
\begin{CD}
\pi_1(S^1) @>{\rm ev}_\#>> \pi_1(\tilde M)=1\\
@V{m\cdot} VV  @Vp_\#VV \\
\pi_1(S^1)@>{\rm ev}_\#>> \pi_1(M), \end{CD}
\end{equation*}
it follows ${\rm ev}_\#(m\ZZ)=1$. This contradicts the injectivity
of $S^1\leq T^k$.  We have a lift of groups from \eqref{lift1}:
\begin{equation}\label{lift2}
\begin{CD}
C(\pi)@>>> C_{{\rm Diff}(\tilde M)}(\pi)@>>>{\rm
Diff}(M)^0\leq {\rm ker}\, \varphi\\
@AAA  @AAA @AAA  \\
\ZZ^{k} @>>>\RR^k  @>>> T^{k}.
\end{CD}
\end{equation}
Since $\ZZ^k\leq C(\pi)$, letting $Q=\pi/\ZZ^k$, there is a central
group extension:
\begin{equation}\label{piinjective}
1\ra \ZZ^k\ra \pi\lra Q\ra 1. \end{equation}

 Now $\RR^k$ acts properly and freely on $\tilde M$ such that
$\tilde M=\RR^k\times W$ where $W=\tilde M/\RR^k$ is a simply
connected smooth manifold.
 The group extension \eqref{piinjective} represents a $2$-cocycle
$f$ in $H^2(Q;\ZZ^k)$ in which $\pi$ is viewed as the product
$\ZZ^k\times Q$ with group law:
\[ (n,\al)(m,\be)=(n+m+f(\al,\be),\al\be).\]
Let $Map(W,\RR^k)$ (respectively $Map(W,T^k)$) be the set of smooth
maps of $W$ into $\RR^k$ (respectively $T^k$) endowed with a
$Q$-module structure in which there is an exact sequence of
$Q$-modules \cite{C-R}:
\[
1\ra \ZZ^k\ra Map(W,\RR^k)\stackrel{\exp}\lra Map(W,T^k)\ra 1.\] As
$Q$ acts properly discontinuously on $W$ with compact quotient, it
follows from \cite[Lemma 8.5]{C-R} (also \cite{LR}):
\begin{equation}\label{vanishingR}
H^i(Q;Map(W,\RR^k))=0\, \ (i\geq 1)
\end{equation}so that the connected homomorphism
$\displaystyle \delta: H^1(Q; Map(W,T^k))\ra H^2(Q;\ZZ^k)$ is an
isomorphism. From this, there exists a map $\chi:Q\ra Map(W,\RR^k)$
such that $\delta^1\chi=f$. Then the $\pi$-action on $\tilde M$ can
be described as
\begin{equation}\label{product1}\begin{split}
(n,\al)(x,w)&=(n+x+\chi(\al)(\al w),\al w)\\
&\ \ \ \ ({}^\forall (n,\al)\in \pi, {}^\forall (x,w)\in \RR^k\times
W).
\end{split}
\end{equation}The action of $\pi$ may depend on the choice of
$\chi'$ such that $\delta^1\chi'=f$. However, the vanishing of
\eqref{vanishingR} shows
\begin{pro}\label{uniqueness}
 Such $\pi$-actions are equivalent to each other.
\end{pro}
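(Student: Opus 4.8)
The plan is to exploit the vanishing \eqref{vanishingR} one degree below where it produced $\chi$, and to convert the resulting coboundary into an $\RR^k$-equivariant diffeomorphism of $\tilde M$ that intertwines the two $\pi$-actions. Suppose $\chi,\chi':Q\ra Map(W,\RR^k)$ both satisfy $\delta^1\chi=\delta^1\chi'=f$. Then $\delta^1(\chi-\chi')=0$, so $\chi-\chi'$ is a $1$-cocycle in $Z^1(Q;Map(W,\RR^k))$. Since $H^1(Q;Map(W,\RR^k))=0$ by \eqref{vanishingR}, this cocycle is a coboundary: there is $\eta\in Map(W,\RR^k)=C^0(Q;Map(W,\RR^k))$ with
\[(\chi-\chi')(\al)=\delta^0\eta(\al)=\al\cdot\eta-\eta,\]
where the $Q$-module structure is $(\al\cdot\eta)(w)=\eta(\al^{-1}w)$ (the extension \eqref{piinjective} being central, $Q$ acts trivially on the coefficient $\RR^k$).

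Next I would define $\Phi:\RR^k\times W\ra \RR^k\times W$ by $\Phi(x,w)=(x-\eta(w),w)$. This is a diffeomorphism, with inverse $(x,w)\mapsto(x+\eta(w),w)$, and it is $\RR^k$-equivariant, $\Phi(s+x,w)=s+\Phi(x,w)$ for $s\in\RR^k$; hence it commutes with the lifted $\RR^k=\tilde T^k$-action and descends to a diffeomorphism of $M$ that preserves the torus action. It then remains to verify the intertwining identity
\[\Phi\big((n,\al)\cdot_{\chi'}(x,w)\big)=(n,\al)\cdot_{\chi}\Phi(x,w)\]
for the two actions \eqref{product1} attached to $\chi$ and $\chi'$. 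Expanding both sides by \eqref{product1}, the $W$-coordinates agree automatically, and equality of the $\RR^k$-coordinates reduces, after the substitution $v=\al w$, exactly to the coboundary relation $(\chi-\chi')(\al)=\al\cdot\eta-\eta$; this is the routine calculation I would carry out in the final write-up.

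Finally I would conclude that $\Phi$ conjugates the $\chi'$-action to the $\chi$-action while respecting the central $\RR^k$, so that upon passing to the quotient the two resulting $(T^k,M)$-structures are equivalent. The only delicate point is the bookkeeping with the $Q$-action convention on $Map(W,\RR^k)$ and the attendant sign in $\delta^0$, since the wrong convention merely interchanges $\eta$ with $-\eta$ and replaces $\Phi$ by its inverse; once the module structure is fixed as above, no genuine obstacle remains, because the essential input---the vanishing of $H^1(Q;Map(W,\RR^k))$---is already furnished by \eqref{vanishingR}.
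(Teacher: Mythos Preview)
Your proposal is correct and follows exactly the route the paper indicates: the paper's entire proof is the single clause ``the vanishing of \eqref{vanishingR} shows'' preceding the proposition, and you have written out the standard Seifert-construction uniqueness argument that this clause abbreviates---namely, that $\chi-\chi'\in Z^1(Q;Map(W,\RR^k))$ is a coboundary $\delta^0\eta$ by $H^1=0$, and the shear $\Phi(x,w)=(x-\eta(w),w)$ conjugates one action to the other. Your bookkeeping with the module convention and the substitution $v=\al w$ matches the action formula \eqref{product1} (note the argument $\chi(\al)(\al w)$ there), so the verification goes through as you describe.
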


Let $(T^k,M)$ be an injective $T^k$-action on a closed manifold $M$
which induces a central group extension \eqref{piinjective} as
above.

\begin{definition}\label{insp}
A $T^k$-action is said to be \emph{injective-splitting} if  there
exists a finite index normal subgroup $Q'$ of $Q$ such that the
induced extension splits;
\[\pi'=\ZZ^k\times Q'.\]
\end{definition}

 Here is a key result concerning
\emph{injective-splitting torus actions}.

\begin{theorem}\label{splitfinite}
Suppose that a closed manifold $M$ admits an injective-splitting
$T^k$-action. Then the following hold.
\begin{itemize}
\item[(i)]
$\displaystyle {}_kC_j\leq b_j$.
 In particular the Halperin-Carlsson conjecture is true.
\item[(ii)]  The $T^k$-action
is homologically injective.
\end{itemize}
\end{theorem}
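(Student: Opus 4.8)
The plan is to reduce everything to the finite cover on which the extension \eqref{piinjective} literally splits as a product, establish homological injectivity there by K\"unneth, and then push it down to $M$ by a transfer argument that exploits the centrality of $\ZZ^k$. Statement (i) will then follow from (ii) by the standard orbit-map/cup-product comparison. First I would pass to the cover. Let $\pi'\le\pi$ be the preimage of $Q'$ under $\pi\to Q$; it has finite index $[Q:Q']$, contains the central $\ZZ^k$, and by hypothesis $\pi'=\ZZ^k\times Q'$. Put $M'=\tilde M/\pi'$, a finite cover of $M$ with covering map $p\colon M'\to M$. Using the uniqueness in Proposition \ref{uniqueness} I may normalize the cocycle so that $\chi|_{Q'}=0$; then in \eqref{product1} the $\pi'$-action on $\tilde M=\RR^k\times W$ becomes the product of the translation action of $\ZZ^k$ on $\RR^k$ with the $Q'$-action on $W$, so that
\[ M'\;\cong\;(\RR^k/\ZZ^k)\times(W/Q')\;=\;T^k\times V,\qquad V:=W/Q',\]
and the lifted $\RR^k$ descends to $T^k=\RR^k/\ZZ^k$ acting by translation on the first factor. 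In particular the orbit map $\mathrm{ev}^{M'}\colon T^k\to M'$ is the inclusion of $T^k\times\{v_0\}$, and the orbit map on $M$ factors as $\mathrm{ev}^M=p\circ\mathrm{ev}^{M'}$.

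Next I would prove (ii). By K\"unneth, $H_1(M';\ZZ)=\ZZ^k\oplus H_1(V;\ZZ)$ and $\mathrm{ev}^{M'}_*\colon\ZZ^k\to H_1(M';\ZZ)$ is the inclusion of the first summand, hence injective. To descend to $M$, take $z\in\ZZ^k$ with $\mathrm{ev}^M_*(z)=0$ in $H_1(M;\QQ)$, and set $y=\mathrm{ev}^{M'}_*(z)\in H_1(M';\QQ)$, so $p_*(y)=0$. Since $Q'\vartriangleleft Q$, the subgroup $\pi'$ is normal in $\pi$, and the transfer $\tau\colon H_1(M;\QQ)\to H_1(M';\QQ)$ obeys the Mackey relation $\tau\circ p_*=\sum_{g\in\pi/\pi'}(c_g)_*$. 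Because $\ZZ^k$ is central in $\pi$, conjugation $c_g$ fixes $z$, whence $(c_g)_*y=y$ for every $g$; therefore $0=\tau(p_*y)=[Q:Q']\,y$, which forces $y=0$ and hence $z=0$ in $\ZZ^k\otimes\QQ$. Thus $\ker(\mathrm{ev}^M_*)$ is a rank-zero subgroup of the torsion-free group $\ZZ^k$, so it is trivial and $\mathrm{ev}^M_*\colon\ZZ^k\to H_1(M;\ZZ)$ is injective, which is (ii).

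Statement (i) should then follow formally from (ii). Rational injectivity of $\mathrm{ev}^M_*\otimes\QQ\colon\QQ^k\to H_1(M;\QQ)$ dualizes to surjectivity of $(\mathrm{ev}^M)^*\colon H^1(M;\QQ)\to H^1(T^k;\QQ)$. Since $H^*(T^k;\QQ)=\Lambda(\alpha_1,\dots,\alpha_k)$ is generated in degree $1$ and $(\mathrm{ev}^M)^*$ is a ring homomorphism, it is surjective in every degree; comparing dimensions in degree $j$ gives $b_j=\dim_\QQ H^j(M;\QQ)\ge\dim_\QQ H^j(T^k;\QQ)={}_kC_j$. Summing over $j$ yields $\sum_j b_j\ge\sum_j{}_kC_j=2^k$, the Halperin-Carlsson inequality.

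The main obstacle is the descent in the middle step: homological injectivity is transparent on the product cover $M'$ but need not survive a finite covering in general, and the composite $\tau\circ p_*$ is multiplication by the index only after one knows the relevant class is fixed by the deck/conjugation action — this is precisely where the centrality of $\ZZ^k$ in $\pi$ (not merely in $\pi'$) is indispensable. A secondary point, the passage from rational to integral injectivity, is painless only because $\ker(\mathrm{ev}^M_*)\subseteq\ZZ^k$ is torsion-free. Equivalently, one may run (ii) through the five-term exact sequence of the central extension \eqref{piinjective}: there $\ker(\mathrm{ev}^M_*)$ is the image of the transgression $d_2\colon H_2(Q)\to\ZZ^k$, and naturality under $Q'\hookrightarrow Q$ together with the rational surjectivity of $H_2(Q';\QQ)\to H_2(Q;\QQ)$ forces $d_2=0$.
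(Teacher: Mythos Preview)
Your argument is correct, but the route differs from the paper's. The paper does not prove (ii) first and then deduce (i); instead it normalizes the entire $\pi$-action via a function $\lambda:Q\to\RR^k$ with $\delta^1\lambda=f$ (available because $[f]$ is torsion in $H^2(Q;\ZZ^k)$), so that the deck group $F=Q/Q'$ acts on the cover through \emph{translations} on the $T^k$-factor (formula~\eqref{actionFF}). This makes the $F$-action on $H_j(T^k)\otimes H_0(W)$ visibly trivial for \emph{every} $j$, and transfer then yields injectivity of $\mathrm{ev}_*:H_j(T^k;\QQ)\to H_j(M;\QQ)$ in all degrees simultaneously; (i) and (ii) fall out together. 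You instead establish (ii) by a purely group-theoretic transfer argument on $H_1$ --- the deck action on $H_1(M')=\pi'^{\mathrm{ab}}$ is conjugation, which fixes the central $\ZZ^k$ --- and then bootstrap to (i) via the cup-product structure of $H^*(T^k;\QQ)$.

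Your path is more economical and isolates the role of centrality cleanly; note too that your $H_1$ step does not actually need the product identification $M'\cong T^k\times V$, since the algebraic splitting $\pi'=\ZZ^k\times Q'$ already gives $\ZZ^k\hookrightarrow\pi'^{\mathrm{ab}}=H_1(M')$ directly --- so your appeal to Proposition~\ref{uniqueness} to arrange $\chi|_{Q'}=0$ is inessential to your own proof (and that proposition speaks to equivalence of actions rather than to the existence of such a $\chi$, which requires the separate observation $H^1(Q';\mathrm{Map}(W,\RR^k))=0$). The paper's path, on the other hand, delivers the marginally stronger statement that $\mathrm{ev}_*$ is injective on $H_j$ for every $j$ without passing through cohomology, and the explicit $\lambda$ it constructs is reused in Section~\ref{concluding}. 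Your five-term-sequence aside is a valid third variant of the same descent.
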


\begin{proof}
\noindent {\bf Algebraic part.}\, \eqref{piinjective} induces a
commutative diagram:
\begin{equation}\label{split-extensionl}
\begin{CD}
1@>>> \ZZ^k@>>> \pi@>>>Q@>>> 1\\
@. ||@. @AA\iota A @AA\iota' A \\
1@>>> \ZZ^k@>>> \pi'@>>>Q'@>>> 1\\
\end{CD}\end{equation}Here $Q/Q'$ is a finite group
by Definition \ref{insp}. For the cocycle $f$ representing the upper
group extension, it follows $\iota'^*[f]=0\in H^2(Q';\ZZ^k)$ by the
hypothesis. We may assume
\begin{equation}\label{fzeroQ}
f|_{Q'}=0.
\end{equation}
On the other hand, if $\tau:H^2(Q';\ZZ^k)\ra H^2(Q;\ZZ^k)$ is the
transfer homomorphism (\cf \cite{BRO}, \cite{BRE}), then
$\tau\circ\iota'^*=|Q:Q'| : H^2(Q;\ZZ^k)\ra H^2(Q;\ZZ^k)$ so that
 $[f]$
is a torsion in $H^2(Q;\ZZ^k)$. There exists an integer $\ell$ such
that $\ell\cdot f=\delta^{1}\tilde\lam$ for some function
$\tilde\lam:Q\ra \ZZ^k$. Put $\displaystyle\lam=\frac
{\tilde\lam}{\ell}: Q\ra \RR^k$. Then
\begin{equation}\label{ffzeroQ}
f=\delta^{1}\lam.
\end{equation}\eqref{fzeroQ} shows 
$\displaystyle[\lam|_{Q'}]\in H^1(Q;\RR^k)$.  Viewed $\RR^k\leq
Map(W,\RR^k)$ as constant maps, $[\lam|_{Q'}]\in
H^1(Q;Map(W,\RR^k))=0$ by \eqref{vanishingR}. So there is an element
$h\in Map(W,\RR^k)$ such that $\lam|_{Q'}=\delta^0 h$. The equality
$\lam(\al')=\delta^0 h(\al')(w)$ $({}^{\forall}\,\al'\in
Q',{}^{\forall}\, w\in W)$ implies
\begin{equation}\label{equivh}
h(w)=h(\al' w)+\lam(\al').
\end{equation}

\noindent{\bf Geometric part.}\,
 Noting Proposition \ref{uniqueness},
 the $\pi$-action \eqref{product1} on $\tilde M$ is equivalent with
 \begin{equation}\label{product2}
(n,\al)(x,w)=(n+x+\lam(\al),\al w)\ \ ({}^\forall\, (x,w)\in
\RR^k\times W).
\end{equation}

Recall that $\pi$ has the splitting subgroup $\pi'=\ZZ^k\times Q'$.
Obviously we have the product action of $\ZZ^k\times Q'$ on
$\RR^k\times W$ such that $\RR^k\times W/\ZZ^k\times Q'=T^k\times
W/Q'$. Define a diffeomorphism $\tilde G:\RR^k\times W\ra
\RR^k\times W$ to be $\displaystyle \tilde G(x,w)=(x+h(w),w)$. Using
\eqref{equivh}, it is easy to check that $\tilde G:
(\pi',\RR^k\times W)\ra (\ZZ^k\times Q', \RR^k\times W)$ is an
equivariant diffeomorphism with respect to the action
\eqref{product2} and the product action. Putting $\displaystyle
\RR^k\times W/\pi'=T^k\mathop{\times}_{Q'} W$ as a quotient space,
$\tilde G$ induces a diffeomorphism $\displaystyle G:
T^k\mathop{\times}_{Q'} W\ra T^k\times W/Q'$. Let $\displaystyle q:
T^k\times W \ra T^k\mathop{\times}_{Q'} W$ be the covering map
($q(t,w)=[t,w]$). Then
\begin{equation}\label{quotinetdiff1}
G\circ q(t,w)=G([t,w])=(t\exp2\pi\mathbf{i}h(w),[w]).
\end{equation}
Noting \eqref{product2}, $\pi$ induces an action of $Q$ on $\tilde
M/\ZZ^k=T^k\times W$ such that
\begin{equation}\label{actionFF}
\al(t,w)=(t\exp 2\pi\mathbf{i}\lam(\al),\al w)\, \ ({}^\forall\,
\al\in Q).
\end{equation}
$F=Q/Q'$ has an induced action on $\displaystyle
T^k\mathop{\times}_{Q'} W$ by $\displaystyle \hat\al[t,w]=[t\exp
2\pi\mathbf{i}\lam(\al),\al w]$ $({}^\forall\, \hat \al\in F)$ which
gives rise to a covering map:
\begin{equation}\label{covering}
F\ra T^k\mathop{\times}_{Q'} W \stackrel{\nu}\lra
T^k\mathop{\times}_{Q} W=M.
\end{equation}

For any $\al\in Q$, consider the commutative diagram:
\begin{equation}\label{trivailind}
\begin{CD}
H_j(T^k\times W)@>\al_*>> H_j(T^k\times W)\\
@VV{q_*}V @VV{q_*}V \\
H_j(T^k\mathop{\times}_{Q'} W)@>\hat\al_*>>
H_j(T^k\mathop{\times}_{Q'} W)
\end{CD}\end{equation}
in which $H_j(T^k)\otimes H_0(W)\leq H_j(T^k\times W)$. By the
formula \eqref{actionFF}, the $Q$-action on the $T^k$-summand is a
translation by $\exp 2\pi\mathbf{i}\lam(\al)\in T^k$ so the homology
action $\al_*$ on $H_j(T^k)\otimes H_0(W)$ is trivial.
 If $\displaystyle H_j(T^k\mathop{\times}_{Q'} W)^F$
denotes the subgroup left fixed under the homology action for every
element $\hat \al\in F$, it follows
\begin{equation}\label{fixF}
q_*(H_j(T^k)\otimes H_0(W))\leq H_j(T^k\mathop{\times}_{Q'} W)^F.
\end{equation}
Using the transfer homomorphism (\cite[2.4 Theorem, III]{BRE},
\cite{BRO}), $\nu$ of\eqref{covering} induces an isomorphism:
$\displaystyle \nu_*:H_j(T^k\mathop{\times}_{Q'} W;\QQ)^F\lra
H_j(M;\QQ)$. In particular, $\displaystyle
\nu_*:q_*(H_j(T^k;\QQ)\otimes H_0(W;\QQ))\ra H_j(M;\QQ)$ is
injective.

On the other hand, let $\displaystyle q':W \ra W/Q'$ be the
projection $q'(w)=[w]$. Define
 a homotopy $\Psi_\se:T^k\times W\ra T^k\times W/Q'$ $(\se\in[0,1])$ to
 be $$\Psi_\se(t,w)=(t\exp2\pi\mathbf{i}(\se\cdot h(w)),[w]).$$
Then $\Psi_0={\rm id}\times q'\simeq G\circ q$ from
\eqref{quotinetdiff1}. As $G_*\circ q_*={\rm id}\times
q'_*:H_j(T^k;\QQ)\otimes H_0(W;\QQ)\ra H_j(T^k;\QQ)\otimes
H_0(W/Q';\QQ)$ is obviously isomorphic, it implies that
$\displaystyle q_*: H_j(T^k;\QQ)\otimes H_0(W;\QQ) \lra
H_j(T^k\mathop{\times}_{Q'} W;\QQ)$ is injective. If $\displaystyle
p=\nu\circ q: T^k\times W \ra M$ is the projection, then
$\displaystyle p_*: H_j(T^k;\QQ)\otimes H_0(W;\QQ)\lra H_j(M;\QQ)$
is injective.

As $p:T^k\times W \ra M$ is $T^k$-equivariant, letting $p(1,w)=x\in
M$, it follows $p(t,w)=tx={\rm ev}(t)$ $({}^\forall\, t\in T^k)$.
 Define an embedding
$\tilde{\rm ev}:T^k\ra T^k\times W$ to be $\tilde{\rm ev}(t)=(t,w)$.
Obviously $\tilde{\rm ev}_* :H_j(T^k;\QQ)\ra H_j(T^k;\QQ)\otimes
H_0(W;\QQ)$ is an isomorphism. Since $p\circ \tilde{\rm ev}(t)={\rm
ev}(t)$, chasing a commutative diagram:
\begin{equation}
\begin{CD}
H_j(T^k;\QQ)@>\tilde{\rm ev}_*>>  H_j(T^k;\QQ)\otimes H_0(W;\QQ)\\
 {\rm ev}_*\searrow @. \swarrow p_* \ \ \\
& \ \ H_j(M;\QQ) & \\
\end{CD}\end{equation}
$\displaystyle {\rm ev}_* :H_j(T^k;\QQ)\ra H_j(M;\QQ)$ is injective.
As $H_j(T^k;\ZZ)$ has no torsion,
 ${\rm ev}_* :H_j(T^k;\ZZ)\ra H_j(M;\ZZ)$ turns out to be injective.
 This proves (i) $\displaystyle {}_kC_j\leq b_j$.
In particular, ${\rm ev}_* :\ZZ^k\ra H_1(M;\ZZ)$ is injective for
$j=1$, \ie  the $T^k$-action is homologically injective by the
definition. This shows (ii). \end{proof}

Any homologically injective action is obviously injective. Theorem A
is obtained from the following corollary.
\begin{corollary}\label{CAl} If $T^k$ is a homologically
injective action on a closed manifold $M$, then $\displaystyle
{}_kC_j\leq b_j$. Thus the Halperin-Carlsson conjecture is true.
\end{corollary}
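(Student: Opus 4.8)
The plan is to show that a homologically injective action is automatically \emph{injective-splitting} in the sense of Definition \ref{insp}, and then to quote Theorem \ref{splitfinite}(i). First note that ${\rm ev}_*\colon\ZZ^k\to H_1(M;\ZZ)$ factors as $\ZZ^k\xrightarrow{{\rm ev}_\#}\pi\to\pi^{ab}=H_1(M;\ZZ)$, so injectivity of ${\rm ev}_*$ forces ${\rm ev}_\#$ to be injective. Hence the action is injective and the whole construction of Section \ref{split} applies, giving the central extension \eqref{piinjective} $1\to\ZZ^k\to\pi\to Q\to1$ with class $[f]\in H^2(Q;\ZZ^k)$. Since $\pi=\pi_1(M)$ is finitely generated, so is $Q=\pi/\ZZ^k$, and $H_1(Q)\cong\ZZ^m\oplus F$ with $F$ finite.

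Next I would translate homological injectivity into a statement about $[f]$. The five-term exact sequence of \eqref{piinjective} reads
\begin{equation*}
H_2(\pi)\to H_2(Q)\xrightarrow{\ \partial\ }\ZZ^k\xrightarrow{{\rm ev}_*}H_1(\pi)\to H_1(Q)\to 0,
\end{equation*}
in which the map $\ZZ^k\to H_1(\pi)$ is induced by the inclusion $\ZZ^k\hookrightarrow\pi$ and thus equals ${\rm ev}_*$. By exactness, ${\rm ev}_*$ is injective if and only if the transgression $\partial$ vanishes. As $\partial$ is, up to sign, the image of $[f]$ under the evaluation map in the universal coefficient sequence
\begin{equation*}
0\to\mathrm{Ext}^1(H_1(Q),\ZZ^k)\to H^2(Q;\ZZ^k)\to\Hom(H_2(Q),\ZZ^k)\to 0,
\end{equation*}
homological injectivity is equivalent to $[f]\in\mathrm{Ext}^1(H_1(Q),\ZZ^k)=\mathrm{Ext}^1(F,\ZZ^k)$, a finite group; in particular $[f]$ is a torsion class detected only on the torsion summand $F$.

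The key step is then to manufacture the finite-index subgroup. Choosing a splitting $H_1(Q)=\ZZ^m\oplus F$ with projection $\rho\colon H_1(Q)\to F$, set $Q'=\ker\bigl(Q\to H_1(Q)\xrightarrow{\rho}F\bigr)$, a normal subgroup of finite index dividing $|F|$. The image of $Q'$ under $Q\to H_1(Q)$ is exactly $\ker\rho\cong\ZZ^m$, so $i_*\colon H_1(Q')\to H_1(Q)$ factors through the torsion-free subgroup $\ZZ^m\hookrightarrow H_1(Q)$. By naturality of the universal coefficient sequence, the restriction $[f]|_{Q'}$ is the image of $[f]$ under $(i_*)^*\colon\mathrm{Ext}^1(H_1(Q),\ZZ^k)\to\mathrm{Ext}^1(H_1(Q'),\ZZ^k)$, and this map factors through $\mathrm{Ext}^1(\ZZ^m,\ZZ^k)=0$. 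Hence $[f]|_{Q'}=0$, the restricted extension splits as $\pi'=\ZZ^k\times Q'$, and the action is injective-splitting. Theorem \ref{splitfinite}(i) then gives ${}_kC_j\le b_j$, and the Halperin-Carlsson conjecture follows.

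I expect the main obstacle to be the construction of $Q'$ together with the verification that restriction annihilates $[f]$: one must use that the pertinent restriction map on $H^2$ respects the universal coefficient decomposition (naturality), and that the torsion class $[f]$ lives entirely on the finite summand $F$, so that passing to a subgroup whose image in $H_1(Q)$ meets the torsion trivially kills it. The identification of the transgression $\partial$ with the evaluation of the extension class $[f]$ is standard (\cf \cite{BRO}) but should be recorded explicitly, since it is what converts the homological hypothesis into the cohomological one.
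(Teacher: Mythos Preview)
Your argument is correct, and the overall strategy --- show the action is injective-splitting, then invoke Theorem \ref{splitfinite}(i) --- is exactly the paper's. But the paper's verification of injective-splitting is considerably more elementary than yours. Rather than passing to $Q$ and analysing $[f]$ via the five-term sequence and universal coefficients, the paper works entirely in $H_1(\pi)=H_1(M;\ZZ)=\ZZ^\ell\oplus F$: since ${\rm ev}_*$ is injective, ${\rm ev}_*(\ZZ^k)$ sits as a rank-$k$ subgroup of $\ZZ^\ell$, so one may choose a complement $\ZZ^{\ell-k}$ with ${\rm ev}_*(\ZZ^k)\oplus\ZZ^{\ell-k}$ of finite index in $\ZZ^\ell$, and take $\pi'=q^{-1}\bigl({\rm ev}_*(\ZZ^k)\oplus\ZZ^{\ell-k}\oplus F\bigr)$ for $q:\pi\to H_1(\pi)$. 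The central extension over $\pi'$ then splits for a trivial reason: the projection $q(\pi')\to{\rm ev}_*(\ZZ^k)\cong\ZZ^k$ furnishes a retraction $\pi'\to\ZZ^k$. Your route, by contrast, identifies \emph{why} homological injectivity forces the extension class to be torsion (it lies in $\mathrm{Ext}^1(H_1(Q),\ZZ^k)$), which is conceptually illuminating and connects the hypothesis directly to the cohomology of $Q$; the price is the extra machinery (five-term sequence, naturality of the universal-coefficient filtration, identification of the transgression with evaluation of $[f]$), none of which the paper needs. Both constructions yield the same conclusion, but the paper's is a two-line retraction argument while yours is a structural explanation.
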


\begin{proof}The proof is essentially the same as \cite[2.2. Lemma]{C-R1}.
 Let $\displaystyle
1\ra \ZZ^k\ra \pi\lra Q\ra 1$ be the central group extension. As
${\rm ev}_*: H_1(T^k;\ZZ)=\ZZ^k\ra H_1(M;\ZZ)=\ZZ^{\ell}\oplus F$
 is injective, ${\rm ev}_*(\ZZ^k)\leq \ZZ^k$ such that ${\rm
ev}_*(\ZZ^k)\oplus \ZZ^{\ell-k}\leq \ZZ^\ell$. If $q:\pi\ra
H_1(M;\ZZ)$ is a canonical projection, then $\pi'=q^{-1}({\rm
ev}_*(\ZZ^k)\oplus\ZZ^{\ell-k}\oplus F)$ is a finite index normal
splitting subgroup of $\pi$.
\end{proof}

\begin{remark}\label{equiv}
As a consequence of this corollary and ${\rm (ii)}$ of Theorem
$\ref{splitfinite}$, \emph{injective-splitting} action is equivalent
with \emph{homologically injective} action.
\end{remark}

Let $(M,g)$ be a $2n$-dimensional K\"ahler manifold with K\"ahler
form $\Omega$.

\begin{corollary}[\cite{CA}]\label{Carrell}
Every \emph{holomorphic} action of a complex torus $T^k_\CC$ on a
compact K\"ahler manifold manifold $(M,\Omega)$ is homologically
injective. Thus the Halperin-Carlsson conjecture holds.
\end{corollary}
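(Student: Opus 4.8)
The plan is to establish \emph{homological injectivity} directly---that is, to show ${\rm ev}_*\colon H_1(T^{2k};\RR)\ra H_1(M;\RR)$ is injective, where $T^{2k}$ is the real torus underlying $T^k_\CC$---and then to invoke Corollary \ref{CAl} (equivalently Theorem \ref{splitfinite}) to deduce ${}_{2k}C_j\leq b_j$ and the Halperin-Carlsson conjecture. This recovers Carrell's observation \cite{CA}. The whole point is to exhibit, for the generators of $H_1(T^{2k})$, cohomology classes on $M$ that pair nondegenerately with a torus orbit, and the K\"ahler form will supply them.

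First I would replace the given K\"ahler form $\Omega$ by its average $\tilde\Omega=\int_{T^{2k}}t^*\Omega\,dt$ over the compact real torus. Since $T^k_\CC$ acts holomorphically, each $t^*\Omega$ is a closed positive $(1,1)$-form for the same complex structure, hence so is $\tilde\Omega$; thus I may assume $\Omega$ is itself a $T^{2k}$-invariant closed positive $(1,1)$-form. Next I would locate a \emph{free} orbit: effectiveness forces the principal isotropy group to be trivial, since a torus element fixing a nonempty open set fixes all of $M$ by analyticity of the holomorphic action. So for generic $x_0$ the orbit $O=T^k_\CC x_0$ is an embedded torus of real dimension $2k$. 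Because the orbit map is holomorphic and injective on this orbit, $O$ is a \emph{complex} submanifold of complex dimension $k$; in particular $T_{x_0}O=d\,{\rm ev}_e(\mathrm{Lie}\,T^{2k})$ is $J$-invariant.

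The key computation is then as follows. For each $\zeta\in\mathrm{Lie}\,T^{2k}$ the $1$-form $\sigma_\zeta:=\iota_{\zeta_M}\Omega$ is closed: since $\zeta_M$ preserves $\Omega$ and $d\Omega=0$, one has $d\sigma_\zeta=L_{\zeta_M}\Omega=0$; it is also $T^{2k}$-invariant. Pairing its class with the orbit circle in a direction $\eta$ gives $\langle[\sigma_\zeta],{\rm ev}_*[c_\eta]\rangle=\int_{c_\eta}\iota_{\zeta_M}\Omega=\Omega(\zeta_M,\eta_M)(x_0)$, the last equality because the integrand is constant along the orbit by invariance. Taking a basis $\eta_1,\dots,\eta_{2k}$ of $\pi_1(T^{2k})$, the matrix $\big(\Omega((\eta_i)_M,(\eta_j)_M)(x_0)\big)$ is exactly $\Omega$ restricted to $T_{x_0}O$. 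Since $T_{x_0}O$ is a complex subspace and $\Omega$ is a positive $(1,1)$-form, this restriction is nondegenerate. Hence if ${\rm ev}_*v=0$ then $v$ is annihilated by every class $[\sigma_\zeta]$, forcing $v=0$; thus ${\rm ev}_*$ is injective, the action is homologically injective, and Corollary \ref{CAl} finishes the proof.

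The main obstacle is precisely this nondegeneracy step, which is where holomorphicity---as opposed to a bare symplectic torus action---is indispensable. For a general Hamiltonian torus action the orbits are isotropic and $\Omega$ restricts to zero on them, so the classes $[\sigma_\zeta]$ would vanish and detect nothing. Here that failure cannot occur: the orbit is a \emph{complex} submanifold, on which a positive $(1,1)$-form is automatically nondegenerate, so the isotropy phenomenon is bypassed. The only remaining points requiring care are the existence of a free (hence complex-submanifold) orbit from effectiveness and the invariance argument making the integrand constant along orbits, both routine once the averaged form is in hand.
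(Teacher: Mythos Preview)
Your argument is correct and follows essentially the same route as the paper: average $\Omega$ to make it invariant, contract with the fundamental vector fields to obtain closed $1$-forms, and use positivity of $\Omega$ on the (complex) orbit tangent space to see that these classes detect ${\rm ev}_*H_1(T^{2k})$, whence homological injectivity and Corollary~\ref{CAl}. The only cosmetic difference is that the paper works with the explicit basis $\xi_i,J\xi_i$ and the identity $\Omega(\xi_i,J\xi_i)=g(\xi_i,\xi_i)>0$ at every point (showing all stabilizers have trivial identity component), whereas you phrase the same nondegeneracy via a single free orbit being a complex submanifold.
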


\begin{proof}Averaging the K\"ahler metric by $T^k_\CC$, we may assume
that $T^k_\CC$ acts as K\"ahler isometries on $M$. $T^k_\CC$ induces
the Killing vector fields $\xi_i, J\xi_i$ on $M$ $(i=1,\dots,k)$.
Note that each $\xi_i$ is a non-vanishing vector field by the
maximum principle. In fact, if $G$ is the connected component of the
stabilizer of $T^k_\CC$ at $x\in M$, then the action induces a
holomorphic representation $\rho:G\ra {\rm GL}(n,\CC)$. As $G$ is
compact, $\rho$ is trivial so that $G=\{1\}$.

Put $\theta_i=\iota_{\xi_i}\Omega$ and
$\theta_{i+k}=\iota_{J\xi_i}\Omega$ $(i=1,\dots,k)$. By the Cartan
formula, $d\Omega=0$ implies $d\theta_i=0$ $(i=1,\dots,2k)$.
 We have $2k$-number of
$1$-cocycles $[\theta_i]\in H^1(M;\RR)$.
As ${\rm ev}(t)=t\cdot x\in M$, it follows ${\rm
ev}_*((\xi_i)_1)=(\xi_i)_x$. Since ${\rm
ev}^*\theta_i((\xi_{k+i})_1)=
\Omega((\xi_i)_x,(J\xi_i)_x)=g(\xi_i,\xi_i)>0$ on $T^k_\CC$, ${\rm
ev}^*:H^1(M;\RR)\ra H^1(T^k_\CC;\RR)$ is surjective.  So ${\rm
ev}_*: H_1(T^k_\CC;\ZZ)\ra H_1(M;\ZZ)$ is injective.
\end{proof}

 For example, any
\emph{holomorphic} action of $T^k_\CC$ on a compact complex
euclidean space form is homologically injective. Recall that
 any effective $T^s$-action on a closed aspherical manifold is
injective. We prove Corollary B.

\begin{theorem}\label{flatHC}
 Any effective $T^s$-action on a compact
euclidean space form $M$ is homologically injective. Thus the
Halperin-Carlsson conjecture is true.
\end{theorem}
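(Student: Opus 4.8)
The plan is to reduce the statement to an algebraic fact about Bieberbach groups and then invoke Corollary \ref{CAl}. Write $M=\RR^n/\pi$, where $\pi$ is the fundamental group, a torsion-free crystallographic (Bieberbach) group. By the first Bieberbach theorem $\pi$ sits in an extension $1\to \Lambda\to\pi\to\Phi\to 1$ in which $\Lambda\cong\ZZ^n$ is the maximal normal abelian subgroup (the pure translations) and $\Phi$ is the finite holonomy group, acting on $\Lambda$ by conjugation. Since $\RR^n$ is contractible, $M$ is aspherical, so any effective $T^s$-action is injective; as recorded in \eqref{lift2}--\eqref{piinjective} this yields a central extension $1\to\ZZ^s\to\pi\to Q\to 1$ with ${\rm ev}_\#:\ZZ^s\hookrightarrow C(\pi)$. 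Because ${\rm ev}_*$ on $H_1$ is the abelianization of ${\rm ev}_\#$, it suffices to prove that the composite $\ZZ^s\to C(\pi)\hookrightarrow\pi\to\pi^{\mathrm{ab}}=H_1(M;\ZZ)$ is injective; since ${\rm ev}_\#$ is already injective, the whole problem reduces to showing that $C(\pi)\to H_1(M;\ZZ)$ is injective.

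First I would identify the center. If $g\in C(\pi)$ had nontrivial rotational part $A$, then conjugating $g$ against the translations would give $A\lambda=\lambda$ for every $\lambda\in\Lambda$; as $\Lambda$ spans $\RR^n$ this forces $A=\mathrm{id}$, so $C(\pi)\subseteq\Lambda$. A translation is then central exactly when it is fixed by the holonomy, giving $C(\pi)=\Lambda^{\Phi}$, the $\Phi$-invariants. Thus the task becomes the injectivity of $\Lambda^{\Phi}\to\pi^{\mathrm{ab}}$.

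The key step is to factor this map through the coinvariants $\Lambda_{\Phi}=\Lambda/\langle\phi\lambda-\lambda\rangle$: the elements $\phi\lambda-\lambda$ die in $\pi^{\mathrm{ab}}$, so $\Lambda\to\pi^{\mathrm{ab}}$ factors as $\Lambda\to\Lambda_{\Phi}\xrightarrow{\beta}\pi^{\mathrm{ab}}$, where $\beta$ is the edge map in the five-term exact sequence $H_2(\Phi)\to\Lambda_{\Phi}\xrightarrow{\beta}H_1(\pi)\to H_1(\Phi)\to 0$. I would then analyze the restriction $\Lambda^{\Phi}\xrightarrow{\alpha}\Lambda_{\Phi}\xrightarrow{\beta}\pi^{\mathrm{ab}}$. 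Over $\QQ$ the natural map $\alpha$ is an isomorphism, so its integral kernel is torsion; being a subgroup of the free group $\Lambda^{\Phi}$ it vanishes, and $\alpha$ is injective with free image. Since $\Phi$ is finite, $\ker\beta=\mathrm{im}(H_2(\Phi))$ is finite, and a finite subgroup meets the free group $\mathrm{im}(\alpha)$ only in $0$; hence $\beta\alpha$ is injective. This proves $C(\pi)=\Lambda^{\Phi}\hookrightarrow H_1(M;\ZZ)$, so ${\rm ev}_*$ is injective and the action is homologically injective. The bound ${}_sC_j\le b_j$ and \eqref{HCC} then follow from Corollary \ref{CAl} (equivalently from Theorem \ref{splitfinite} via Remark \ref{equiv}).

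The main obstacle is precisely the integral injectivity of $C(\pi)=\Lambda^{\Phi}\to H_1(M;\ZZ)$: rationally it is immediate, but $H_1(M;\ZZ)$ carries torsion and $\Lambda_{\Phi}\to\pi^{\mathrm{ab}}$ has a genuine finite kernel coming from $H_2(\Phi)$, so the argument must exploit that the image of the invariants is free in order to separate it from this torsion. Everything else — the Bieberbach structure and the identification of the center as $\Lambda^{\Phi}$ — is standard.
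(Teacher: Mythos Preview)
Your argument is correct, and it takes a genuinely different route from the paper's proof. The paper does not try to show directly that $C(\pi)\to H_1(M;\ZZ)$ is injective; instead it verifies the \emph{injective-splitting} condition of Definition~\ref{insp}. From the central extension $1\to\ZZ^s\to\pi\to Q\to 1$ it notes that $\ZZ^s$ sits inside the translation lattice $\ZZ^n$, so $Q$ contains the finite-index subgroup $Q'=\ZZ^n/\ZZ^s\cong G\times\ZZ^{n-s}$ with $G$ finite abelian; pulling back the free summand $\ZZ^{n-s}\hookrightarrow Q'$ produces a finite-index subgroup $\fA\cong\ZZ^s\times\ZZ^{n-s}$ of $\pi$ on which the extension splits, and then Theorem~\ref{splitfinite} supplies both homological injectivity and the Betti-number bound in one stroke. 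Your approach instead identifies $C(\pi)=\Lambda^{\Phi}$ and runs the Lyndon--Hochschild--Serre five-term sequence for $1\to\Lambda\to\pi\to\Phi\to 1$, separating the torsion-free image of $\Lambda^{\Phi}\to\Lambda_{\Phi}$ from the finite kernel coming from $H_2(\Phi)$. The paper's argument is tailored to exhibit Theorem~\ref{splitfinite} at work and constructs an explicit splitting subgroup, while yours is a self-contained homological explanation of why central translations survive in $H_1$ and, as a bonus, proves the slightly sharper fact that all of $C(\pi)$ (not just ${\rm ev}_\#(\ZZ^s)$) embeds in $H_1(M;\ZZ)$; both then feed into Corollary~\ref{CAl} for the inequality ${}_sC_j\le b_j$.
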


\begin{proof}Given a $T^s$-action
for some $s\geq 1$, there is a central group extension:
$\displaystyle 1\ra\ZZ^s\ra\pi=\pi_1(M)\lra Q\ra 1$. Since $\pi$ has
a unique maximal normal finite index abelian subgroup $\ZZ^n$ (\cf
\cite{WO}), it follows $\ZZ^s\leq \ZZ^n$. $Q$ has a finite index
subgroup $Q'=\ZZ^n/\ZZ^s\cong G\times \ZZ^{n-s}$ where $G$ is a
finite abelian group. The inclusion $\iota:\ZZ^{n-s}\ra Q'$ induces
a group (extension) $\fA$ of finite index in $\ZZ^n$. As $\fA$ is
isomorphic to $\ZZ^s\times \ZZ^{n-s}$, $\fA$ is a finite index
normal splitting subgroup of $\pi$. The $T^s$-action is
injective-splitting so apply Theorem \ref{splitfinite}.
\end{proof}

\section{Calabi construction and
torus actions} \label{concluding}

 In \cite[\S\, 7]{C-R1}, Conner and
Raymond have stated that the Calabi's theorem \cite{CALB} shows the
existence of a $T^k$-action with $k={\rm rank}\,H_1(M;\ZZ)>0$. We
agree the existence of such actions in view of the Calabi
construction. However when we look at a proof of the Calabi's
theorem (\cite[p.125]{WO}), it is not easy to find such
$T^k$-actions. In fact, let $\nu:\pi\ra \ZZ^k$ be the projection
onto the direct summand $\ZZ^k$ of $H_1(M;\ZZ)$. Then there is a
group extension $\displaystyle 1\ra \Gamma\ra \pi\ra \ZZ^k\ra 1$ in
which $\Gamma$ is the fundamental group of a euclidean space form
$M^{n-k}=\RR^{n-k}/\Gamma$. In general an element $\ga\in \pi$ has
the form
\[
\left(\left[\begin{array}{c}
a\\
b\\
\end{array}\right],  \left(\begin{array}{lr}
A& B\\
0& I
\end{array}\right)\right)\ \ (a\in \RR^{n-k},b\in \RR^k).  \]
The holonomy group $\displaystyle L(\pi)=\{\left(\begin{array}{lr}
A& B\\
0& I
\end{array}\right)\}$ does not necessarily leave the subspace
$0\times\RR^k$ invariant. (In particular, $\ZZ^n\cap (0\times\RR^k)$
is not necessarily uniform in $0\times\RR^k$.) So we have to find
another decomposition to get a $T^k$-action on $M$.

\begin{lemma}\label{Bieba}
Let $\pi$ be a Bieberbach group such that ${\rm rank}\,
\pi/[\pi,\pi]=k>0$. Then there exists a faithful representation
$\rho:\pi\ra {\rm E}(n)$ such that the euclidean space form
$\RR^n/\rho(\pi)$ admits an effective $T^k$-action.
\end{lemma}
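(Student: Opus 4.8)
The plan is to read the torus action directly off the center of $\pi$, using the holonomy-fixed subspace in place of the coordinate subspace coming from the $H_1$-projection. First I would fix an algebraic model via Bieberbach's theorems: $\pi$ sits in an extension $1\to\Lambda\to\pi\to\Phi\to 1$ with $\Lambda\cong\ZZ^n$ the maximal abelian (translation) subgroup and $\Phi$ the finite holonomy group acting on $\Lambda$. Choosing any $\Phi$-invariant inner product on $\Lambda\otimes\RR=\RR^n$ makes $\Phi$ act orthogonally and yields a faithful $\rho:\pi\to{\rm E}(n)=\RR^n\rtimes O(n)$ with $M=\RR^n/\rho(\pi)$; this $\rho$ is the representation sought.

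Next I would locate the center and count ranks. For $g=(u,A)\in\rho(\pi)$ one computes $t_v\,g\,t_v^{-1}=(u+(I-A)v,\,A)$, so a translation $t_v$ is central precisely when $Av=v$ for every $A\in\Phi$; hence $C(\pi)=\Lambda^\Phi:=\Lambda\cap V^\Phi$, where $V^\Phi=(\RR^n)^\Phi$ is the holonomy-fixed subspace. The crux is the identity $\rank\Lambda^\Phi=\dim V^\Phi=k$. For the dimension I would use that with real coefficients $H_1(\pi;\RR)\cong(\Lambda\otimes\RR)_\Phi$, and that for a finite group the coinvariants agree with the invariants $V^\Phi$, so $\dim V^\Phi=b_1=\rank(\pi/[\pi,\pi])=k$. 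For the lattice rank I would apply the averaging projector $\tfrac1{|\Phi|}\sum_{A\in\Phi}A$, which maps $\Lambda\otimes\QQ$ onto $(\Lambda\otimes\QQ)^\Phi$ and exhibits $\Lambda^\Phi$ as a pure sublattice with $\Lambda^\Phi\otimes\QQ=(\Lambda\otimes\QQ)^\Phi$; thus $\rank\Lambda^\Phi=\dim V^\Phi=k$ and $\Lambda^\Phi$ is a full lattice in $V^\Phi$.

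Finally I would produce the action and identify the obstacle. Since $Av=v$ for $v\in V^\Phi$, the displayed conjugation gives $t_v\,g\,t_v^{-1}=g$, so the subgroup $V^\Phi\le\RR^n$ of translations centralizes $\rho(\pi)$ and descends to a commuting family of isometries of $M$; the kernel of the induced map $V^\Phi\to\Isom(M)$ is $V^\Phi\cap\rho(\pi)=\Lambda^\Phi$, so $T^k=V^\Phi/\Lambda^\Phi$ acts effectively on $M$. The step I expect to be the main obstacle is exactly the rank identity $\rank\Lambda^\Phi=k$: the subspace $0\times\RR^k$ read off from the $H_1$-projection need not be holonomy-invariant, so its lattice trace can fail to be uniform, as noted before the statement. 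Replacing it by the fixed subspace $V^\Phi$ --- equivalently, correcting coordinates by the averaging projector --- is precisely the ``other decomposition'' that restores uniformity and simultaneously matches the first Betti number.
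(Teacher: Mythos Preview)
Your argument is correct and takes a genuinely different, more direct route than the paper. The paper builds a \emph{new} representation: starting from the surjection $\pi\twoheadrightarrow\ZZ^k$ it extracts an extension $1\to\ZZ^{n-k}\to\pi\to Q\to 1$, shows its class in $H^2_\phi(Q;\ZZ^{n-k})$ is torsion via a transfer argument, chooses a rational cochain $\lambda:Q\to\RR^{n-k}$ with $\delta^1\lambda=f$, and then uses $\lambda$ to write down an explicit $\rho$ whose holonomy has block form $\bigl(\begin{smallmatrix}\bar\phi(\alpha)&0\\0&I_k\end{smallmatrix}\bigr)$, so that the coordinate subspace $0\times\RR^k$ visibly centralizes $\rho(\pi)$ and contains a cocompact lattice. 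You instead keep whatever Bieberbach embedding $\pi\hookrightarrow{\rm E}(n)$ is already given and locate the centralizing copy of $\RR^k$ intrinsically as the holonomy-fixed subspace $V^\Phi$, with the crucial rank identity $\rank\Lambda^\Phi=\dim V^\Phi=k$ supplied by the averaging projector together with $H_1(\pi;\QQ)\cong(\Lambda\otimes\QQ)_\Phi\cong(\Lambda\otimes\QQ)^\Phi$. Your approach is shorter, avoids all the cocycle bookkeeping, and in fact proves more: \emph{every} crystallographic realization of $\pi$ already admits the $T^k$-action, and you get $\rank C(\pi)=k$ immediately, which the paper obtains only afterward as Theorem~\ref{reprove}. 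What the paper's construction buys is an explicit coordinate decomposition $\RR^{n-k}\times\RR^k$ with block-diagonal holonomy, tailored to the Seifert-fibering framework used throughout; your argument recovers the same decomposition, if wanted, by splitting orthogonally along $V^\Phi$.
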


\begin{proof}By the hypothesis, there is a group extension
$\displaystyle 1\ra \Gamma\ra \pi\stackrel{\nu}\lra \ZZ^k\ra 1$.
Since $\pi$ is a Bieberbach group, it admits a maximal normal finite
index abelian subgroup $\ZZ^n$. Put $\nu(\ZZ^n)=A$. Consider the
commutative diagram of the group extensions:
\begin{equation}\begin{CD}
1@>>> \Gamma @>\iota>>\pi@>\nu>> \ZZ^k@>>>1\\
@. @AAA @AAA @AAA \\
1@>>> \Gamma\cap \ZZ^n@>\iota>>\ZZ^n@>\nu>> A@>>>1.\\
\end{CD}\end{equation}
Since $\pi/\ZZ^n\stackrel{\hat\nu}\ra \ZZ^k/A$ is surjective, $A$ is
a free abelian subgroup of rank $k$. By the embedding $\displaystyle
\hat\iota: \Gamma/\Gamma\cap \ZZ^n\leq \pi/\ZZ^n$, $\Gamma\cap
\ZZ^n$ is a finite index subgroup of $\Gamma$. It follows easily
that $\Gamma\cap \ZZ^n$ is a maximal normal abelian subgroup of
$\Gamma$. We may put $\displaystyle \Gamma\cap \ZZ^n=\ZZ^{n-k}$ so
that $\ZZ^n=\ZZ^{n-k}\times A$. Putting $Q=\pi/\ZZ^{n-k}$ and
$F=\Gamma/\ZZ^{n-k}$ is a finite group, we have the group
extensions:
\begin{equation}
\label{Biextension0} \begin{CD}
1@>>> \ZZ^{n-k}@>i>> \pi@>\mu>>Q@>>> 1,\\
\end{CD}\end{equation}where
\begin{equation}\label{Biextension1}
\begin{CD}
1@>>> F@>\hat \iota>> Q @>\hat\nu>>\ZZ^k @>>> 1
\end{CD}\end{equation}is also a group extension.
Consider the commutative diagram of group extensions:
\begin{equation} \label{Biextension2}
\begin{CD}
1@>>> \ZZ^{n-k}@>\iota>> \pi@>\mu>>Q@>>> 1\\
 @.|| @. @AAA @AA\iota'A \\
1@>>> \ZZ^{n-k}@>\iota>> \ZZ^n@>\mu>>B@>>> 1\\
\end{CD}\end{equation}where we put $B=\mu(\ZZ^n)$.
As $\hat\nu(B)=\nu(\ZZ^n)=A$ from \eqref{Biextension0}, it follows
$\ZZ^n=\ZZ^{n-k}\times B$. Thus $\ZZ^n$ is a splitting subgroup of
$\pi$. Since \eqref{Biextension0} is not necessarily central, let
$\phi:Q\ra {\rm Aut}(\ZZ^{n-k})$ be the conjugation homomorphism. If
$[f]\in H^2_\phi(Q;\ZZ^k)$ is the representative cocycle of
\eqref{Biextension0}, then $\iota'^*[f]=0$ in $H^2(B;\ZZ^{n-k})$.
Then $[f]$ is a torsion because $\tau\circ\iota'^*=|Q/B| :
H^2_\phi(Q;\ZZ^k)\ra H^2_\phi(Q;\ZZ^k)$ still holds for the transfer
homomorphism $\tau:H^2(B;\ZZ^{n-k})\ra H^2_\phi(Q;\ZZ^{n-k})$.
(Compare \cite{BRO}.) Similarly as in the proof of Theorem
\ref{splitfinite} there is a function $\lam:Q\ra \RR^{n-k}$ such
that $f=\delta^{1}\lam$. Note from \eqref{ffzeroQ} that
\begin{equation}\label{integral} \ell\cdot\lam(Q)\leq
\ZZ^{n-k}.
\end{equation}

Let $\ZZ^k$ act on $\RR^k$ by translations and by
\eqref{Biextension1} $\hat\nu:Q\ra \ZZ^k\leq {\rm E}(k)$ defines a
properly discontinuous action of $Q$ on $\RR^k$;
$$
\al(w)=\hat\nu(\al)+w\ \,({}^\forall\,\al\in Q, {}^\forall\, w\in
\RR^k).$$ As in \eqref{product1} we have a properly discontinuous
action of $\pi$ on $\RR^n=\RR^{n-k}\times \RR^k$:
\begin{equation*}\label{piaction}
\begin{split}
(n,\al)\left[\begin{array}{c} x\\
w\end{array}\right]&=\left[\begin{array}{c}
n+\bar\phi(\al)(x)+\lam(\al)\\
\hat\nu(\al)+ w\end{array}\right]\\ &=
\left(\left[\begin{array}{c} n+\lam(\al)\\
\hat\nu(\al)\end{array}\right],\left(\begin{array}{lr} \bar\phi(\al) & \\
& I_k\end{array}\right)\right)\left[\begin{array}{c} x\\
w\end{array}\right].
\end{split}
\end{equation*}
Since $\phi|_B={\rm id}$ from \eqref{Biextension2}, the image
$\phi(Q)$ is finite in ${\rm Aut}(\ZZ^{n-k})$ which implies
$\phi(Q)\leq {\rm O}(n-k)$ up to conjugate. As $\pi$ is torsionfree
and acts properly discontinuously,
 we obtain a faithful
homomorphism $\rho:\pi \ra \rho(\pi)\leq{\rm E}(n)$ defined by
\begin{equation}\label{affineaction2}
\rho(n,\al)=\left(\left[\begin{array}{c} n+\lam(\al)\\
\hat\nu(\al)\end{array}\right],\left(\begin{array}{lr} \bar\phi(\al) & \\
& I_k\end{array}\right)\right).
\end{equation}
Therefore we have a compact euclidean space form $\RR^n/\rho(\pi)$.\\

We prove that $\RR^n/\rho(\pi)$ admits a $T^k$-action. Noting
\eqref{integral}, we define a subgroup of $\ZZ^n$ by
\begin{equation}\label{translationsub}
\tilde B=\{(-\ell\cdot \lam(\be),\ell\cdot\beta)\in \ZZ^n \, |\,
\be\in B\}. \end{equation} It is isomorphic to $B\cong \ZZ^k$. As
$\phi|_B={\rm id}$,
\begin{equation}\label{trans2}
\rho(-\ell\cdot \lam(\be),\ell\cdot\beta)=\left(\left[\begin{array}{c} 0\\
\ell\cdot\hat\nu(\be)\end{array}\right],I_n\right)\in 0\times \RR^k.
\end{equation}Thus $\rho(\tilde B)$ is a translation subgroup
with rank $k$:
\begin{equation}\label{cocompact}
 \rho(\tilde B)\leq (0\times \RR^k)\cap
\rho(\pi).
\end{equation}Since
$(0\times \RR^k)/\rho(\tilde B)$ is compact, so is $(0\times
\RR^k)/(0\times \RR^k)\cap \rho(\pi)$. We may put $\displaystyle
(0\times \RR^k)/(0\times \RR^k)\cap \rho(\pi)=T^k$. Moreover, from
\eqref {affineaction2} a calculation shows that
\begin{equation}
\rho(n,\al)\cdot
\left(\left[\begin{array}{c} 0\\
y\end{array}\right],I_n\right)\\
=\left(\left[\begin{array}{c} 0\\
y\end{array}\right],I_n\right) \cdot \rho(n,\al),
\end{equation}\ie each $y\in\RR^k$ centralizes $\rho(\pi)$;
\begin{equation}\label{cetergroup}
0\times \RR^k\leq C_{{\rm E}(n)}(\rho(\pi)).
\end{equation} Let ${\rm Isom}(\RR^k/\rho(\pi))^0$
denote the identity component of euclidean isometries of
$\RR^k/\rho(\pi)$. From \eqref{cetergroup} we have the following
covering groups (\cf \eqref{lift2}):
\begin{equation*}\label{pulbackcovering-m}
\begin{CD}
1@>>>C(\rho(\pi)) @>>> C_{{\rm E}(n)}(\rho(\pi))@>>> {\rm Isom}(\RR^k/\rho(\pi))^0 \\
@. @AAA  @AAA @AAA \\
1@>>> (0\times \RR^{k})\cap \rho(\pi) @>>> (0\times \RR^{k}) @>>> T^{k}\\
\end{CD}
\end{equation*}
Hence $\RR^k/\rho(\pi)$ admits a $T^k$-action.
\end{proof}

When the center $C(\pi)$ of $\pi=\pi_1(M)$ for a closed manifold $M$
is finitely generated, ${\rm rank}\, C(\pi)$ denotes the rank of a
free abelian subgroup. Recall from \cite{LR2} that an effective
$T^s$-action on $M$ is said to be \emph{maximal} if $s={\rm rank}\,
C(\pi)$. If $M$ is a closed aspherical manifold admitting an
effective $T^s$-action, then $s\leq {\rm rank}\, C(\pi)$ (\cf
\cite{C-R}). Let $M$ be a euclidean space form $M=\RR^n/\pi$. If
$C(\pi)$ has rank $k$, then it is easy to see that $M$ admits a
$T^k$-action. In particular, ${\rm rank}\, C(\pi)\leq {\rm rank}\,
H_1(M;\ZZ)$ by Theorem \ref{splitfinite}.\\

The Bieberbach theorem implies that $M$ is affinely diffeomorphic to
$\RR^n/\rho(\pi)$. Combined Lemma \ref{Bieba} with Theorem
\ref{flatHC}, we obtain

\begin{theorem}\label{reprove}
 Let $M$ be a compact $n$-dimensional euclidean space form with
${\rm rank}\, H_1(M;\ZZ)=k>0$. Then $M$ admits a homologically
injective $T^k$-action. In particular, ${\rm rank}\, C(\pi)={\rm
rank}\, H_1(M;\ZZ)$.
\end{theorem}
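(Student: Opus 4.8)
The plan is to combine Lemma \ref{Bieba}, the Bieberbach rigidity theorem, and Theorem \ref{flatHC}, and then to read off the rank equality from a two-sided estimate.

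First I would record that for the euclidean space form $M=\RR^n/\pi$ one has $H_1(M;\ZZ)=\pi/[\pi,\pi]$ (since $M$ is aspherical), so the hypothesis ${\rm rank}\,H_1(M;\ZZ)=k>0$ is precisely the hypothesis ${\rm rank}\,\pi/[\pi,\pi]=k>0$ needed to invoke Lemma \ref{Bieba}. That lemma yields a faithful representation $\rho:\pi\ra {\rm E}(n)$ such that $\RR^n/\rho(\pi)$ carries an effective $T^k$-action. By the Bieberbach theorem, $M$ is affinely diffeomorphic to $\RR^n/\rho(\pi)$; transporting the action along this diffeomorphism endows $M$ itself with an effective $T^k$-action. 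Finally, Theorem \ref{flatHC} promotes every effective torus action on a compact euclidean space form to a homologically injective one, which establishes the first assertion.

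For the identity ${\rm rank}\,C(\pi)={\rm rank}\,H_1(M;\ZZ)$ I would prove the two inequalities separately. Since the $T^k$-action just produced is homologically injective, it is in particular injective, so by \eqref{piinjective} it induces a central extension $1\ra\ZZ^k\ra\pi\ra Q\ra1$; centrality forces $\ZZ^k\leq C(\pi)$, whence ${\rm rank}\,C(\pi)\geq k$. For the opposite inequality I would invoke the bound ${\rm rank}\,C(\pi)\leq{\rm rank}\,H_1(M;\ZZ)$ already recorded in the discussion preceding the statement, which itself rests on the fact that the free central part of $\pi$ of rank ${\rm rank}\,C(\pi)$ descends to an effective torus action on $M$ and on part (i) of Theorem \ref{splitfinite}. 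Combining the two inequalities yields the equality with $k={\rm rank}\,H_1(M;\ZZ)$.

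Given how much is already in place, this final statement is largely an assembly of the preceding results, so the genuinely new content is the lower bound ${\rm rank}\,C(\pi)\geq k$. The main point to verify carefully is therefore that the constructed $T^k$-action is injective and that its lift contributes a rank-$k$ free abelian subgroup of the center $C(\pi)$, so that it meets the cited upper bound and pins the rank down exactly; the existence half of the theorem is then immediate from Lemma \ref{Bieba} and Theorem \ref{flatHC}.
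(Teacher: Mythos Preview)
Your proposal is correct and follows essentially the same route as the paper: the paper's proof is the one-line observation that Bieberbach rigidity identifies $M$ with $\RR^n/\rho(\pi)$, so Lemma~\ref{Bieba} supplies the $T^k$-action and Theorem~\ref{flatHC} upgrades it to homologically injective, while the rank equality is read off from the two inequalities recorded in the paragraph immediately preceding the statement. Your only cosmetic difference is that you derive ${\rm rank}\,C(\pi)\geq k$ directly from the central extension \eqref{piinjective}, whereas the paper cites the general aspherical bound $s\leq{\rm rank}\,C(\pi)$ from \cite{C-R}; but that cited bound is proved in exactly the way you indicate.
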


\noindent{{\bf{Acknowledgement}.}\,{ We thank Professor M. Masuda
who called our attention to the Halperin-Carlsson conjecture.}

\bibliographystyle{amsplain}

\end{document}